\theoremstyle{plain}
 \newtheorem{thm}{Theorem}[section]
 \newtheorem{prop}{Proposition}[section]
 \newtheorem{lem}{Lemma}[section]
\theoremstyle{definition}
 \newtheorem{exm}{Example}[section]
 \newtheorem{rem}{Remark}[section]
\newtheorem{dfn}{Definition}[section]
\numberwithin{equation}{section}
\newcommand{\R}{\mathbb{R}}
\newcommand{\ddim}{\mathrm{ddim\;}}
\newcommand{\dind}{\mathrm{dind\;}}
\newcommand{\corank}{\mathrm{corank\;}}
\newcommand{\grad}{\mathrm{grad\;}}
\DeclareMathOperator{\Span}{\mathrm{span\,}}
\DeclareMathOperator{\Id}{\mathrm{Id}}
\renewcommand{\le}{\leqslant}
\title[INTEGRABLE SYSTEMS IN COSYMPLECTIC GEOMETRY]{INTEGRABLE SYSTEMS IN COSYMPLECTIC GEOMETRY}
\author{Bo\v zidar Jovanovi\'c, Katarina Luki\'c}
\address{B.J.: Mathematical Institute of the  Serbian Academy of Sciences and
Arts, Kneza Mihaila 36, 11000 Belgrade, Serbia}
\email{bozaj@mi.sanu.ac.rs}
\address{K.L.:  Faculty of Mathematics, University of Belgrade, Studentski trg 16, 11000 Belgrade, Serbia}
\email{katarina\_lukic@matf.bg.ac.rs}
\keywords{Noncommutative integrability, action-angle coordinates, Reeb flows, evaluation vector fields}
\subjclass{37J35, 53D15, 53C12}
\begin{document}

\begin{abstract}
Motivated by the time-dependent Hamiltonian dynamics,  
we extend the notion of Arnold-Liouville and noncommutative integrability of Hamiltonian systems on symplectic manifolds to that on cosymplectic manifolds. We prove a variant of the non-commutative integrability for evaluation and Reeb vector fields on cosymplectic manifolds and provide a construction of cosymplectic action-angle variables.
\end{abstract}

\maketitle


\section{Introduction}

Recall  that the equation
\begin{equation}
\dot x=X
 \label{eq}
\end{equation}
on an $n$--dimensional manifold $M$ is
\emph{(non-Hamiltonian) completely integrable} if there is an open dense
subset $M_{reg}\subset M$ with a proper submersion
\begin{equation}\label{sub*}
\pi=(f_1,\dots,f_p): M_{reg}\longrightarrow W\subset \R^p
\end{equation}
and an Abelian Lie algebra
$\mathcal X$ of symmetries containing $X$ such that:
\begin{itemize}
\item[(i)] the vector field $X$ is tangent to the fibers
of $\pi$, i.e., $f_i$ are the first integrals of the flow;

\item[(ii)] the fibers of $\pi$ are orbits of $\mathcal
X$.
\end{itemize}

If \eqref{eq} is completely integrable then $M_{reg}$ is foliated
on $(n-p)$--dimensional tori with linear dynamics that can be determined by quadratures.
Alternatively, the condition that the map is proper can be replaced by the condition that the vector
fields in $\mathcal X$ are complete. Then the flow is linear over invariant manifolds that are diffeomorphic to
$\mathbb T^l\times \mathbb R^{n-p-l}$, where $\mathbb T^l$ is an $l$--dimensional torus (see \cite{Ar})).
Note that if we relax the condition that the algebra $\mathcal X$ is Abelian by the condition that it is solvable, we
still have solvability of the equation by quadratures \cite{Koz, CFG}.

However, the above definition does not reflect
a possible underlying geometrical structure of the equations.
The concept of complete integrability within symplectic, Poisson (Arnold--Liouville  integrability),
and contact geometry is very well studied (e.g., see \cite{Ar, BJ, LMV, MF, N, BM, KT, Jo, JJ1}). There is also a notion of integrability on almost-symplectic and
$b$-symplectic manifolds \cite{FS, KM}.

Recently, a general setup for integrability that include  symplectic, contact and Dirac structures is performed by Zung \cite{Zung}.
In this paper, motivated by time-dependent Hamiltonian dynamics, we consider some specific aspects of the integrability within the framework of cosymplectic
geometry, which are not covered in \cite{Zung}.

In Section \ref{sec2} we recall on the basic definitions in cosymplectic geometry and prove a variant of the non-commutative integrability of evaluation vector fields within cosymplectic setting  (the Nekhoroshev type formulation in Theorem \ref{TEOREMA} and the Mishchenko--Fomenko type formulation in Theorem \ref{TEOREMA2}). 
In Section \ref{sec3} we construct  the cosymplectic analogue of the
 Nekhoroshev action--angle variables \cite{N} (see Theorem \ref{TEOREMA3}), or, in the commutative case,
the cosymplectic analogue  of the standard action--angle variables given by  the  Arnold--Liouville theorem    \cite{Ar} (see Subsection \ref{sec3.2}).

The time $t$ in time-dependent Hamiltonian systems is a globally defined Casimir function of the associated Poisson structure. We consider a general case, where a globally defined Casimir function  does not need to exist. It is interesting that the Hamiltonian flows of the first integrals are quasi-periodic, but not integrable in the sense of the usual definition of
 complete integrability of Hamiltonian flows on Poisson manifolds (see  Subsection \ref{sec3.2}).

Note that  the Arnold--Liouville  integrability in the time-dependent case is studied in \cite{GMS} within a framework of symplectic principal $\R$--bundles (e.g.,
see \cite{LMP, GS}).
The integrability of time-dependent Hamiltonian systems can be considered also within the framework of contact geometry
(e.g., see \cite{BM, KT, Jo, Jo1}).

\section{Integrability on cosymplectic manifolds}\label{sec2}

\subsection{Notation}
Recall that a
cosymplectic manifold $(M,\omega,\eta)$ is a  $(2n+1)$--dimensional
manifold $M$ endowed with a closed 2--form $\omega$
and a closed 1--form $\eta$, such that $\eta\wedge\omega^n$ is a volume form  (see \cite{Al, CLL}).
The \emph{gradient vector
field} $\grad f$ of the function $f\in C^\infty(M)$ is defined by
the conditions
$$
i_{\grad f}\omega=df-Z(f)\eta, \qquad \eta(\grad f)=Z(f),
$$
where $Z$ is the \emph{Reeb vector field} determined by
$$
i_Z\omega=0, \qquad \eta(Z)=1.
$$

The cosymplectic manifold is a corank 1 Poisson manifold, with the Poisson bracket defined by
\begin{equation}\label{PB}
\{f,g\}=\omega(\grad f,\grad g).
\end{equation}
Its symplectic leaves are leaves of the integrable distribution $\ker\eta$.

Besides the gradient vector field, to every smooth function $f\in C^\infty(M)$, one can associate the \emph{Hamiltonian vector field} $X_f$ of the Poisson
structure
\eqref{PB}
$$
X_f(g)=\{g,f\}, \qquad g\in C^\infty(M),
$$
and the \emph{evaluation vector field}
$$
Y_f=Z+X_f.
$$

The Hamiltonian and evaluation vector fields can also be defined by relations
\begin{equation}\label{definicije}
i_{X_f}\omega =df-Z(f)\eta, \quad \eta(X_f)=0, \qquad i_{Y_f}\omega=df-Z(f)\eta, \quad \eta(Y_f)=1.
\end{equation}

We have the following important identities (we follow the sign conventions from \cite{Al, CLL}).
\begin{eqnarray}
 && i_{X_f}\omega=i_{\grad f}\omega,\label{r1} \\
&& \{f,g\}=\omega(X_f,X_g),\label{r2}\\
 && X_{\{f,g\}}=-[X_f,X_g],\label{r3}\\
 && [Z,X_f]=X_{Z(f)}.\label{r4}
\end{eqnarray}

Any point has a neighborhood with canonical coordinates $(t,q_1,\dots,q_n,p_1,\dots,p_n)$, such that
$$
\omega=dq \wedge dp=dq_1\wedge dp_1+\dots+dq_n\wedge dp_n, \qquad \eta=dt, \qquad Z=\frac{\partial}{\partial t}.
$$

In canonical coordinates we find
\begin{align}
\grad f &=\frac{\partial f}{\partial t}\frac{\partial}{\partial t}+\sum_i \frac{\partial f}{\partial p_i}\frac{\partial}{\partial q_i}-\frac{\partial f}{\partial
q_i}\frac{\partial}{\partial p_i},\\
X_f &=\sum_i \frac{\partial f}{\partial p_i}\frac{\partial}{\partial q_i}-\frac{\partial f}{\partial q_i}\frac{\partial}{\partial p_i},\\
Y_f &=\frac{\partial}{\partial t}+\sum_i \frac{\partial f}{\partial p_i}\frac{\partial}{\partial q_i}-\frac{\partial f}{\partial q_i}\frac{\partial}{\partial p_i},
\end{align}

The cosymplectic manifolds provide a natural framework for the time-dependent Hamiltonian mechanics
 (e.g., see \cite{EG, LS}). Namely, consider a non-autonomous Hamiltonian equations
\begin{equation} \label{1}
\frac{dq_i}{dt}=\frac{\partial H}{\partial p_i},\qquad
\frac{dp_i}{dt}=-\frac{\partial H}{\partial q_i}, \qquad
i=1,\dots,n,
\end{equation}
in the canonical coordinates $(q_1,\dots,q_n,p_1,\dots,p_n)$ of the cotangent bundle $T^*Q$ of the configuration space $Q$.
It is natural to consider the equation \eqref{1}
in the extended phase space $\R\times T^*Q(t,q,p)$ by adding the equation $\dot t=1$.
Then the Hamiltonian system becomes an autonomous one, of the form  $\dot x=Y_H$, where $Y_H$ is the
evaluation vector field
\begin{equation}\label{reb1}
Y_H=\frac{\partial}{\partial t}+\sum_i{\frac{\partial H}{\partial p_i}\frac{\partial}{\partial q_i}-
\frac{\partial H}{\partial q_i}\frac{\partial}{\partial p_i}}
\end{equation}
on the cosymplectic manifold $(\R\times T^*Q, dq\wedge dp,dt)$.
Alternatively, we can consider
the \emph{Poincar\'e--Cartan} 1--form
\begin{equation}\label{PCF}
\alpha=pdq-Hdt=p_1dq_1+\dots+p_ndq_n-Hdt
\end{equation}
and the cosymplectic manifold
\begin{equation}
\label{csm}(\R\times T^*Q, -d\alpha=dq\wedge dp +dH\wedge dt,dt).
\end{equation}
Then the vector field \eqref{reb1} coincides with the Reeb vector field $Z$ determined by the conditions
$i_Z (d\alpha)=0$ and $dt(Z)=1$ (e.g., see \cite[Chapter 9]{Ar}).

\subsection{Linearization}
Consider the flow of the evaluation vector field $Y_H$:
\begin{equation}
\dot x=Y_H\label{EVF}
\end{equation}
on a $(2n+1)$-dimensional cosymplectic manifold $(M,\omega,\eta)$.
The function $f$ is the first integral of \eqref{EVF} if
\begin{equation}\label{int}
Y_H(f)=Z(f)+X_H(f)=Z(f)+\{f,H\}=0.
\end{equation}

First, we formulate an analogue of Nekhoroshev theorem on non-commutative integrability (see \cite{N}).

\begin{thm}\label{TEOREMA} Assume that the system \eqref{EVF} has $m$ independent integrals
\[
\pi=(f_1,\dots,f_m): \, M  \longrightarrow \R^m,
\]
such that $f_1,\dots,f_r$ Poisson commute with all integrals
\begin{equation}\label{PC}
\{f_i,f_j\}=0, \quad i=1,\dots,r, \quad j=1,\dots,m,
\end{equation}
 and $2n=m+r$. Then:

{\rm (i)} The equations \eqref{EVF} are
locally solvable by quadratures on an open subset $M_{reg}$ where $d\pi(x)$ is of the maximal rank:
$\mathcal X=\langle Y_H, X_{f_1},\dots,X_{f_r}\rangle$ is an Abelian Lie algebra of symmetries, which has the
fibers of $\pi$ as orbits.

{\rm (ii)} If the vector fields $Y_H,X_{f_1},\dots,X_{f_r}$ are
complete on $M_{reg}$, then a connected component $M_\mathbf c^0$ of the invariant
variety
\begin{equation}\label{inv povrs2}
M_\mathbf c=\pi^{-1}(\mathbf c)\cap M_{reg}
\end{equation}
is diffeomorphic to $\mathbb T^l \times \R^{r+1-l}$, for some $l$, $0\le l \le r+1$. There exist coordinates $\varphi_1,\dots,\varphi_l,x_1,\dots,x_{r+1-l}$
of $\mathbb T^l \times \R^{r+1-l}$, which linearise the equation \eqref{EVF}:
\begin{eqnarray*}
&&\dot\varphi_i=\omega_i=\mathrm{const}, \qquad i=1,\dots,l,\\
&&\dot x_j=a_j=\mathrm{const}, \qquad  j=1,\dots,r+1-l.
\end{eqnarray*}
\end{thm}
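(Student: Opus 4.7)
The plan is to reduce Theorem \ref{TEOREMA} to the standard abelian-Lie-algebra scheme: show $\mathcal X = \langle Y_H, X_{f_1}, \ldots, X_{f_r}\rangle$ is an $(r+1)$-dimensional abelian algebra of symmetries tangent to and transitive on fibers of $\pi$, and then read off the torus-times-line structure of the orbits from the induced $\R^{r+1}$-action.

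For (i), abelianness of $\mathcal X$ drops out of (\ref{r3}) and (\ref{r4}) together with the hypotheses. Namely $[X_{f_i}, X_{f_j}] = -X_{\{f_i,f_j\}} = 0$ for $1\le i,j\le r$ by (\ref{PC}), and
\[
[Y_H, X_{f_i}] = [Z, X_{f_i}] + [X_H, X_{f_i}] = X_{Z(f_i)} - X_{\{H,f_i\}} = X_{Z(f_i)+\{f_i,H\}} = X_{Y_H(f_i)} = 0
\]
by (\ref{r3}), (\ref{r4}) and the first-integral identity (\ref{int}). Tangency to the fibers amounts to checking $V(f_j)=0$ for every $V\in\mathcal X$ and all $j$: $Y_H(f_j)=0$ by hypothesis and $X_{f_i}(f_j) = \{f_j,f_i\} = 0$ for $i\le r$ by (\ref{PC}). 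Since the fibers of $\pi$ on $M_{reg}$ have dimension $2n+1-m = r+1$ and $\mathcal X$ has at most $r+1$ generators, it remains to verify linear independence: $\eta(Y_H)=1$ while $\eta(X_{f_i})=0$ separates $Y_H$ from the $X_{f_i}$, and injectivity of $X_f \mapsto df|_{\ker\eta}$ (because $\omega|_{\ker\eta}$ is symplectic) reduces independence of the $X_{f_i}$ to independence of the restrictions $df_i|_{\ker\eta}$, which holds on the regular set. Thus $\mathcal X$ spans the tangent space of each fiber at every point of $M_{reg}$, fibers are $\mathcal X$-orbits, and local solvability by quadratures is the standard consequence of having a transitive abelian algebra of commuting flows.

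For (ii), completeness of $Y_H, X_{f_1}, \ldots, X_{f_r}$ promotes the Lie-algebra action to a genuine smooth $\R^{r+1}$-action on $M_{reg}$ whose orbits coincide with the connected components of the fibers $M_\mathbf c$. The isotropy subgroup at any point is a closed discrete subgroup of $\R^{r+1}$, hence a lattice $\Lambda$ of some rank $l$ with $0 \le l \le r+1$, giving $M_\mathbf c^0 \cong \R^{r+1}/\Lambda \cong \T^l \times \R^{r+1-l}$. Pulling back the linear coordinates of $\R^{r+1}$ through this quotient produces coordinates $(\varphi_1,\ldots,\varphi_l, x_1,\ldots,x_{r+1-l})$, and the generators of the action — in particular $Y_H$ — become constant vector fields, yielding the stated linearization with constant frequencies $\omega_i$ and drifts $a_j$.

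The main technical point I expect to wrestle with is the linear-independence step for $X_{f_1},\ldots, X_{f_r}$ on $M_{reg}$: the regularity of $\pi$ controls $df_1,\ldots,df_m$ in $T^*M$, not their restrictions to $\ker\eta$, and a nontrivial combination of the $df_i$ could a priori coincide with a multiple of $\eta$ at a point, collapsing one of the $X_{f_i}$. Making this rigorous requires either defining $M_{reg}$ to also include the condition that $X_{f_1},\ldots, X_{f_r}$ be independent, or extracting such independence from the Poisson-commutativity hypothesis (\ref{PC}) combined with the dimension count $2n = m+r$.
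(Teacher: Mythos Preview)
Your approach is essentially the paper's: establish abelianness via \eqref{r3}, \eqref{r4}, and \eqref{int}; establish tangency via the integral hypothesis and \eqref{PC}; separate $Y_H$ from the $X_{f_i}$ via $\eta$; and read off (ii) from the induced $\R^{r+1}$-action. The only difference is the independence step for $X_{f_1},\dots,X_{f_r}$, which you correctly flag as incomplete.

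The gap is real, and neither of your proposed fixes is what the paper does. Redefining $M_{reg}$ would weaken the statement, and Poisson-commutativity together with the dimension count $2n=m+r$ alone do not rule out a relation $\sum_i a_i\,df_i\vert_{x_0}=c\,\eta\vert_{x_0}$ with $c\ne 0$. The missing ingredient is the first-integral condition \eqref{int} itself. Suppose $\sum_{i=1}^r a_i X_{f_i}\vert_{x_0}=0$. Contracting with $\omega$ and using \eqref{definicije} gives
\[
\sum_i a_i\,df_i\big\vert_{x_0}=Z\Big(\sum_i a_i f_i\Big)\,\eta\big\vert_{x_0}.
\]
Now \eqref{int} yields $Z(\sum_i a_i f_i)=-\{\sum_i a_i f_i,H\}=(\sum_i a_i X_{f_i})(H)$, which vanishes at $x_0$ by assumption. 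Hence $\sum_i a_i\,df_i\vert_{x_0}=0$, contradicting regularity of $\pi$ at $x_0$. So the integral hypothesis, not merely \eqref{PC}, is what forces the potential $\eta$-component of $\sum a_i\,df_i$ to vanish. With this argument in place your proof matches the paper's.
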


In the case where $m=r=n$, we have a variant of the Arnold--Liouville integrability on cosymplectic manifolds.

\begin{proof}
(i)
First, note that the vector fields $Y_H,X_{f_1},\dots,X_{f_r}$ commute between themselves. The relations
$$
[X_{f_i},X_{f_j}]=0, \qquad i,j=1,\dots, r
$$
follow directly from \eqref{r3} and \eqref{PC}. Further, from \eqref{r4} we have
\begin{align*}
[Y_H,X_{f_i}]&=[X_H+Z,X_{f_i}]=-X_{\{H,f_i\}}+X_{Z(f_i)}\\
&=X_{X_H(f_i)}+X_{Z(f_i)}=X_{Y_H(f_i)}=0, \qquad\quad i=1,\dots,r.
\end{align*}
Next, since $f_1,\dots,f_m$ are integrals of \eqref{EVF}, $Y_H$ is tangent to $M_\mathbf c$, while from $X_{f_i}(f_j)=\{f_j,f_i\}=0$, $i=1,\dots,r$, we have that
$X_{f_1},\dots,X_{f_r}$ are tangent to $M_\mathbf c$ as well.

It only remains to prove that $Y_H,X_{f_1},\dots,X_{f_r}$ are independent vector fields. Since
$X_{f_1},\dots,X_{f_r}$ are tangent to the symplectic leaves of the Poisson bracket, and $Y_H$ is transversal to symplectic leaves, we directly have that $Y_H$
is independent of the Hamiltonian vector fields $X_{f_1},\dots,X_{f_r}$.
Assume that there exist real numbers $a_1,\dots,a_r$ such that
\begin{equation}\label{s0}
a_1X_{f_1}+\dots+a_r X_{f_r}=0, \qquad a_1^2+\dots+a_r^2\ne 0,
\end{equation}
at some point $x_0\in M$. Then, from the definition
\eqref{definicije}, we get
\begin{equation}\label{s1}
(a_1df_1+\dots+a_r df_r)\vert_{x_0}=Z(a_1f_1+\dots+a_r f_r)\eta\vert_{x_0}.
\end{equation}
On the other hand, since $f_i$ are integrals, from \eqref{int} we have
\begin{align}
\nonumber Z(a_1f_1&+\dots+a_r f_r)+\{a_1 f_1+\dots+a_r f_r,H\}=\\
&Z(a_1f_1+\dots+a_r f_r)-(a_1X_{f_1}+\dots+a_r
X_{f_r})(H)=0.\label{s2}
\end{align}

By combining \eqref{s0}, \eqref{s1}, and \eqref{s2}, it follows
$$
(a_1df_1+\dots+a_r df_r)\vert_{x_0}=0.
$$

Thus, at regular points of the $(r+1)$--dimensional invariant level sets \eqref{inv povrs2}, the commuting vector fields
$Y_H,X_{f_1},\dots,X_{f_r}$ are independent and, by the Lie theorem \cite{Koz}, the equation \eqref{EVF} can be solved locally by quadratures.

\medskip

(ii) The proof of item (ii) is the same as the corresponding statement in the Arnold--Liouville theorem (see \cite{Ar}).
\end{proof}

As we already mentioned, the Hamiltonian equations with the
time-dependent Hamiltonian $H$  can be seen  as
the flow of the evaluation vector field \eqref{reb1} or the
the Reeb flow on the cosymplectic manifold
\eqref{csm}. Similarly, as it follows
from the following statement, the notion of integrability of the systems defined by the evaluation and the Reeb vector fields are naturally related.

\begin{prop}
Consider cosymplectic manifolds $(M,\omega,\eta)$ and $(M,\omega+dH\wedge\eta,\eta)$, where $H$ is a smooth function on $M$. By $Z,X_f,Y_f,\{\cdot,\cdot\}$
and  $Z',X'_f,Y'_f,\{\cdot,\cdot\}'$ denote the Reeb vector field, the Hamiltonian and  evaluation vector field of the function $f$, and the Poisson bracket on
$(M,\omega,\eta)$ and $(M,\omega+dH\wedge\eta,\eta)$, respectively.

{\rm (i)}  The Reeb vector field $Z'$  coincides with the
evaluation vector field $Y_H$.

{\rm (ii)} The Poisson brackets coincide as well: $\{f_1,f_2\}=\{f_1,f_2\}'$.
\end{prop}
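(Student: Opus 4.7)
The plan is to prove (i) by verifying that the evaluation vector field $Y_H$ on $(M,\omega,\eta)$ satisfies the two defining relations characterizing the Reeb vector field $Z'$ of $(M,\omega',\eta)$, with $\omega'=\omega+dH\wedge\eta$, and to prove (ii) by the stronger assertion that the Hamiltonian vector fields $X_f$ and $X'_f$ of any $f\in C^\infty(M)$ coincide.

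Before addressing (i), I would record that $(M,\omega',\eta)$ is indeed cosymplectic: $\omega'$ is closed since $d(dH\wedge\eta)=-dH\wedge d\eta=0$, and $\eta\wedge\eta=0$ forces $\eta\wedge(\omega')^n=\eta\wedge\omega^n$, so the volume condition is preserved. Hence $Z'$ is the unique vector field with $i_{Z'}\omega'=0$ and $\eta(Z')=1$. The relation $\eta(Y_H)=1$ is part of \eqref{definicije}. For the contraction with $\omega'$, I would expand
$$
i_{Y_H}\omega' = i_{Y_H}\omega + (i_{Y_H}dH)\eta - (i_{Y_H}\eta)\,dH = \bigl(dH - Z(H)\eta\bigr) + Y_H(H)\eta - dH,
$$
and use $Y_H(H)=Z(H)+X_H(H)=Z(H)+\{H,H\}=Z(H)$ to collapse the right-hand side to $0$. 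This yields $Z'=Y_H$.

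For (ii), since $\eta(X_f)=0$ by \eqref{definicije}, it suffices to check $i_{X_f}\omega' = df - Z'(f)\eta$, after which the uniqueness in the defining equations \eqref{definicije} for the structure $(M,\omega',\eta)$ forces $X_f=X'_f$. I would compute
$$
i_{X_f}\omega' = i_{X_f}\omega + X_f(H)\eta - \eta(X_f)\,dH = df - Z(f)\eta + X_f(H)\eta,
$$
and match this with $df - Z'(f)\eta$ using $Z'(f)=Y_H(f)=Z(f)+X_H(f)=Z(f)-X_f(H)$, where I rely on the antisymmetry $X_H(f)=\{f,H\}=-X_f(H)$. Hence $X_f=X'_f$ for every $f$, and therefore $\{f_1,f_2\}'=X'_{f_2}(f_1)=X_{f_2}(f_1)=\{f_1,f_2\}$.

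I do not foresee a real obstacle: the whole proposition reduces to two short Cartan-type contractions combined with the bookkeeping identities $\{H,H\}=0$ and $X_H(f)=-X_f(H)$. The only point deserving a moment of care is the verification that $\omega'$ still satisfies the nondegeneracy condition for a cosymplectic structure, which is why I would record it at the outset so that both $Z'$ and $X'_f$ are legitimately defined before the main computation.
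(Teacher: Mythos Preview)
Your proposal is correct and follows essentially the same route as the paper: both parts verify that $Y_H$ (respectively $X_f$) satisfies the defining equations of $Z'$ (respectively $X'_f$) via the same contraction identities and the relations $\{H,H\}=0$, $X_H(f)=-X_f(H)$. Your additional remark that $(M,\omega+dH\wedge\eta,\eta)$ is genuinely cosymplectic is a welcome clarification that the paper leaves implicit.
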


\begin{proof} (i)
The Reeb vector field $Z'$ is determined by
$$
i_{Z'}(\omega+dH\wedge \eta)=i_{Z'}\omega+dH(Z')\eta-\eta(Z')dH=0, \quad \eta(Z')=1,
$$
which is equivalent to the definition of $Y_H$.

\medskip

(ii) It is enough to prove that the Hamiltonian vector fields $X_f$ and $X'_f$ coincide. The Hamiltonian vector fields $X_f, X'_f$ are defined by
\begin{align*}
& i_{X_f}\omega=df-Z(f)\eta, \\
& i_{X'_f}(\omega+dH\wedge \eta)=i_{X'_f}\omega+dH(X'_f)\eta-dH\eta(X'_f) =df-Z'(f)\eta, \\
&  \eta(X_f)=\eta(X'_f)=0.
\end{align*}

Since
$$
Z(f)=Y_H(f)-X_H(f)=Y_H(f)+\{H,f\}=Y_H(f)+X_f(H)
$$
and $Z'(f)=Y_H(f)$, we get, respectively,
\begin{align*}
& i_{X_f}\omega=df-dH(X_f)\eta-Y_H(f)\eta, \qquad \eta(X_f)=0,\\
& i_{X'_f}\omega=df-dH(X'_f)\eta-Y_H(f)\eta, \qquad \eta(X'_f)=0,
\end{align*}
implying $X_f=X'_f$.

Therefore, $\{f_1,f_2\}=X_{f_2}(f_1)=X'_{f_2}(f_1)=\{f_1,f_2\}'$.
\end{proof}

\begin{exm}{\rm
As in the case of integrable contact systems, where K-contact manifolds provide natural examples of integrable Reeb flows \cite{JJ1},
K-cosymplectic manifolds (see \cite{BG}) provide natural examples of cosymplectic integrable Reeb flows.
}\end{exm}

\begin{exm}
The examples with commutative symmetries ($m=r=n$) for time-dependent Hamiltonain systems on cosymplectic manifolds
\eqref{csm} can be found in \cite{LS}.
\end{exm}

\begin{rem}{\rm
The vector field \eqref{reb1} can be seen also as a section of the characteristic line bundle of the Poincar\'e--Cartan 1--form \eqref{PCF}. The integrability of the
vector field $Z$ by using the Noether symmetries of characteristic line bundles is considered in \cite{Jo1}. In the case
the Poincar\'e--Cartan 1--form is contact, one can also use the notion of contact integrability \cite{BM, KT, Jo, JJ1}.
}\end{rem}

\begin{rem}
If  $(M, \Sigma)$ is a $b$--symplectic manifold, then there is a natural cosymplectic structure on $\Sigma$, such that the Reeb vector field $Z$
of this cosymplectic structure is the modular vector field of $(M,\Sigma)$ restricted to $\Sigma$ (see \cite{Sh}).
It would be interesting to  relate Theorem \ref{TEOREMA} to the integrability concepts on $b$--symplectic manifolds given in \cite{KM}.
\end{rem}

\begin{rem} 
The notion of integrability on contact, almost-symplectic, $b$-symplectic and Dirac manifolds is essentially a variant
of the Arnold--Liouville integrability. On the other hand, we note that the problem of integrability in non-holonomic mechanics has many interesting and unexpected
geometrical aspects (see \cite{BMT}), for example invariant manifolds do not need to be tori (see \cite{FJ}).
Further, while for non-holonomic Chaplygin systems with an invariant measure
integrability is usually related to the Arnold--Liouville integrability by means of the Chaplygin
Hamiltonisation via time reparametrisation (e.g., see \cite{BMT, Na} and references therein),
the examples of integrable Chaplygin systems
with an invariant measure that do not allow Chaplygin Hamiltonisation were recently obtained in \cite{Jo2}.
\end{rem}

\subsection{Complete sets of integrals on cosymplectic manifolds}
Similarly like for the Hamiltonian systems on Poisson manifolds, we have:

\begin{lem}
Assume that $f_1$ and $f_2$ are integrals of the system \eqref{EVF}. Then their Poisson bracket $\{f_1,f_2\}$ is also the first integral
of the flow.
\end{lem}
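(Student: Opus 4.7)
The plan is to verify directly that $Y_H(\{f_1,f_2\})=Z(\{f_1,f_2\})+\{\{f_1,f_2\},H\}=0$, using the hypothesis $Z(f_i)+\{f_i,H\}=0$ for $i=1,2$, the Jacobi identity for the Poisson bracket \eqref{PB}, and the fact that the Reeb vector field $Z$ acts as a derivation on the Poisson algebra.

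First I would establish the derivation property
\[
Z(\{f_1,f_2\})=\{Z(f_1),f_2\}+\{f_1,Z(f_2)\}.
\]
This is the only step that uses cosymplectic structure beyond formal Poisson identities, and it follows cleanly from the commutator relation \eqref{r4}. Indeed, writing $\{f_1,f_2\}=X_{f_2}(f_1)$ and using $[Z,X_{f_2}]=X_{Z(f_2)}$, we get
\[
Z(\{f_1,f_2\})=Z(X_{f_2}(f_1))=X_{f_2}(Z(f_1))+[Z,X_{f_2}](f_1)=\{Z(f_1),f_2\}+\{f_1,Z(f_2)\}.
\]

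Next, I would apply the Jacobi identity to rewrite $\{\{f_1,f_2\},H\}$ as $\{f_1,\{f_2,H\}\}+\{\{f_1,H\},f_2\}$ and then substitute the integrability conditions $\{f_i,H\}=-Z(f_i)$ into both slots. This turns the second summand of $Y_H(\{f_1,f_2\})$ into exactly the negative of the derivation identity established above, so the two cancel.

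The main obstacle is essentially just bookkeeping with signs; there is no geometric difficulty, since once the derivation property of $Z$ and the Jacobi identity are in hand, the cancellation is forced by the hypothesis that $f_1,f_2$ are first integrals of $Y_H$. No regularity or global assumption is needed: the statement is purely algebraic in $C^\infty(M)$.
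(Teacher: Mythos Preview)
Your proof is correct and follows essentially the same route as the paper: both combine the Jacobi identity with the derivation property $Z(\{f_1,f_2\})=\{Z(f_1),f_2\}+\{f_1,Z(f_2)\}$ and the hypothesis $Z(f_i)=-\{f_i,H\}$. The only difference is that the paper justifies the derivation identity by a direct check in canonical coordinates, whereas you derive it intrinsically from the commutator relation \eqref{r4}, which is a cleaner argument.
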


\begin{proof}
Since $f_1$ and $f_2$ are integrals we have
\begin{equation}\label{f1f2}
Z(f_1)+\{f_1,H\}=0, \quad Z(f_2)+\{f_2,H\}=0.
\end{equation}
On the other side we have the Jacobi identity
\[
\{\{f_1,f_2\},H\}+\{\{H,f_1\},f_2\}+\{\{f_2,H\},f_1\}=0,
\]
which together with \eqref{f1f2} implies
\[
\{Z(f_1),f_2\}+\{f_1,Z(f_2)\}+\{\{f_1,f_2\},H\}=0.
\]

The proof follows from the identity
\[
Z(\{f_1,f_2\})=\{Z(f_1),f_2\}+\{f_1,Z(f_2)\},
\]
that can be easily proved in the canonical coordinates $(t,q_1,\dots,q_n,p_1,\dots,p_n)$.
\end{proof}

Therefore, we can always assume that we are dealing with sets of integrals that are closed under the Poisson bracket and
the condition \eqref{PC} can be replaced by the  analogue condition in the Mishchenko--Fomenko theorem on non-commutative integrability \cite{MF}.

\begin{dfn}
Consider a Poisson subalgebra $(\mathcal F,\{\cdot,\cdot\})\subset (C^\infty(M),\{\cdot,\cdot\})$. Let
\[
F_x=\Span\{df(x)\,\vert\,f\in\mathcal F\}\subset T_x^*M.
\]

We say that $\mathcal F$ is \emph{complete} on a cosymplectic manifold $(M,\omega,\eta)$ if there are $m$ functions $f_1,\dots,f_m\in\mathcal F$, such that
\begin{equation}\label{CI}
F_x=\Span\{df_i(x)\,\vert\,i=1,\dots,m\}, \qquad  df_1\wedge\dots\wedge df_m\ne 0
 \end{equation}
 on a open dense set $M_{reg}$,
where their Poisson brackets are of the form
\begin{equation}\label{CP}
\{f_i,f_j\}=a_{ij}(f_1,\dots,f_m),
\end{equation}
and
\begin{equation}\label{CC}
\dim \ker\{\cdot,\cdot\}\vert_{F_x}=r,  \qquad  m+r=2n.
\end{equation}
The number $m$ and $r$ are called the \emph{differential dimension} and the \emph{differential index} of $\mathcal F$ and they are denoted
by $\ddim\mathcal F$ and $\dind\mathcal F$, respectively.
\end{dfn}

Note that the completeness condition \eqref{CC},
\[
\ddim\mathcal F+\dind\mathcal F=\dim M-1,
\]
is weaker then the completeness condition for the corresponding Hamiltonain systems on $(M,\{\cdot,\cdot\})$:
$\ddim\mathcal F+\dind\mathcal F=\dim M+\corank\{\cdot,\cdot\}=\dim M+1$.

\begin{thm}\label{TEOREMA2} Assume that the system \eqref{EVF} has a complete algebra of integrals $\mathcal F$ and consider the momentum mapping
\begin{equation}\label{MMP}
\pi=(f_1,\dots,f_m): \, M_{reg}  \longrightarrow \R^m,
\end{equation}
where $f_1,\dots,f_m\in\mathcal F$ satisfy the completeness conditions \eqref{CI}, \eqref{CP} and \eqref{CC}. Then:

{\rm (i)} The equations \eqref{EVF} are
locally solvable by quadratures on $M_{reg}$.

{\rm (ii)} A connected compact component $M_\mathbf c^0$ of the invariant variety $M_\mathbf c=\pi^{-1}(\mathbf c)$
is diffeomorphic to a torus $\mathbb T^{r+1}$. There exist coordinates $\varphi_1,\dots,\varphi_{r+1}$
of $\mathbb T^{r+1}$, which linearise the equation \eqref{EVF}:
\begin{eqnarray*}
\dot\varphi_i=\omega_i=\mathrm{const}, \qquad i=1,\dots,r+1.
\end{eqnarray*}
\end{thm}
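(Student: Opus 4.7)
The plan is to reduce the Mishchenko--Fomenko style hypothesis of Theorem \ref{TEOREMA2} to the Nekhoroshev style hypothesis already settled in Theorem \ref{TEOREMA}, by replacing the generators $f_1,\dots,f_m$ of $\mathcal F$ with new local generators in which the first $r$ entries Poisson--commute with all the others. The bracket relation \eqref{CP} equips the image $\pi(M_{reg})\subset\R^m$ with its own Poisson structure $\{c_i,c_j\}_{\R^m}=a_{ij}(c_1,\dots,c_m)$, and the completeness condition \eqref{CC} forces this bracket to have constant rank $m-r$ near $\mathbf c_0=\pi(x_0)$. I would apply the Weinstein splitting theorem (equivalently, the Darboux--Lie normal form for constant--rank Poisson structures) on a neighbourhood of $\mathbf c_0$ to produce $r$ independent Casimirs $\phi_1,\dots,\phi_r$ together with $m-r$ transverse coordinates $\psi_1,\dots,\psi_{m-r}$, and then pull them back via the momentum map, setting $g_i=\phi_i\circ\pi$ and $h_j=\psi_j\circ\pi$.

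These new functions form $m$ independent integrals of $Y_H$, since they are smooth functions of integrals: $Y_H(g_i)=\sum_k(\partial\phi_i/\partial c_k)\,Y_H(f_k)=0$, and the same for the $h_j$'s. The Casimir property pulls back cleanly, for $\{g_i,f_l\}=\sum_k(\partial\phi_i/\partial c_k)\,a_{kl}(f_1,\dots,f_m)=0$, so each $g_i$ Poisson--commutes with every $f\in\mathcal F$ (a generic $f$ is locally a function of $f_1,\dots,f_m$ by the rank theorem), in particular with every $h_j$ and every $g_{i'}$. The hypotheses of Theorem \ref{TEOREMA} are therefore satisfied verbatim by the relabeled set $g_1,\dots,g_r,h_1,\dots,h_{m-r}$, so part (i) of that theorem provides the local solvability by quadratures, proving part (i) here. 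For part (ii), part (ii) of Theorem \ref{TEOREMA} identifies a connected component of $\pi^{-1}(\mathbf c)\cap M_{reg}$ with $\mathbb T^l\times\R^{r+1-l}$ once $Y_H,X_{g_1},\dots,X_{g_r}$ are complete along it; but these fields are all tangent to the compact invariant set $M_\mathbf c^0$, so completeness on $M_\mathbf c^0$ is automatic. Compactness then forces $l=r+1$, yielding $M_\mathbf c^0\cong\mathbb T^{r+1}$, and the linear angle coordinates $\varphi_1,\dots,\varphi_{r+1}$ are inherited from Theorem \ref{TEOREMA}.

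The main obstacle I anticipate is the splitting step itself: verifying that \eqref{CC} delivers constant rank $m-r$ on an open neighbourhood of $\mathbf c_0$ in $\R^m$, not just at the point. This should follow after shrinking $M_{reg}$ and using the openness of the maximal--rank locus together with the fact that $\pi$ is a submersion there, but it is the place where care is required. The remaining ingredients---the pullback identity for Casimirs, the fact that the replacement integrals still generate a complete algebra in the sense of the definition, and the transfer of angle coordinates from the local Nekhoroshev model to the compact torus $M_\mathbf c^0$---are essentially bookkeeping.
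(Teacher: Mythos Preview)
Your proposal is correct and follows essentially the same approach as the paper: both arguments push the Poisson structure down to the image via \eqref{CP}, observe that \eqref{CC} makes it constant corank $r$, extract $r$ local Casimirs there, and pull them back to obtain commuting integrals $g_1,\dots,g_r$ so that Theorem~\ref{TEOREMA} applies. The only cosmetic difference is that the paper keeps the original $f_1,\dots,f_m$ as the momentum map and simply adjoins the $g_i$, whereas you also replace $f_{r+1},\dots,f_m$ by transverse Darboux coordinates $h_j$; since $(g,h)$ and $f$ differ by a local diffeomorphism of the target, the fibres and hence the invariant tori coincide, so the two versions are equivalent.
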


\begin{proof} The momentum mapping \eqref{MMP} is a Posisson mapping with the constant corank $r$ Poisson bracket on $D^m=\pi(M_{reg})$ defined by
$\{y_i,y_j\}_{D^m}=a_{ij}(y_1,\dots,y_m)$.
There exist an open neighborhood $B^m\subset D^m$ of
$\mathbf c=\pi(M^0_\mathbf c)$ with Casimir functions $G_1,\dots,G_r$ of the bracket $\{\cdot,\cdot\}_{D^m}$ (e.g., see \cite{LM}).
Then
\[
g_1=G_1(f_1,\dots,f_m),\dots, g_r=G_r(f_1,\dots,f_m)
\]
and independent functions defined on $U=\pi^{-1}(B^m)$
that commute with all integrals
\[
\{g_k,f_i\}=\{G_k,y_i\}_{D^m}=0.
\]

The rest of the proof is the same as above with the
Abelian Lie algebra of symmetries
$\mathcal X=\Span\{Y_H, X_{g_1},\dots,X_{g_r}\}$ which are tangent to the fibers of $\pi$ within $U$.
\end{proof}

\section{Cosymplectic action-angle coordinates}\label{sec3}

\subsection{Action-angle coordinates for the Reeb flow}
Assume that the Reeb flow
\begin{equation}\label{REEBflow}
\dot x=Z
\end{equation}
on a $(2n+1)$--dimensional cosymplectic manifold $(M,\omega,\eta)$ has
a complete set of integrals $\mathcal F$ and that generic regular invariant manifolds are compact.
According to Theorem \ref{TEOREMA2}, this is equivalent to the assumption of Theorem \ref{TEOREMA} with compact regular invariant manifolds.
We have $m$ independent integrals $f_1,\dots,f_m$,
such that $f_1,\dots,f_r$ Poisson commute with all integrals
\begin{equation}\label{PC*}
\{f_i,f_j\}=0, \quad i=1,\dots,r, \quad j=1,\dots,m,
\end{equation}
 and $2n=m+r$. Then $\mathcal X=\langle Z, X_{f_1},\dots,X_{f_r}\rangle$ is an Abelian Lie algebra of symmetries, which has the
fibers of $\pi$ as orbits.
A connected component $M_\mathbf c^0$ of the invariant variety
\begin{equation}\label{inv povrs}
M_\mathbf c=\pi^{-1}(\mathbf c)\cap M_{reg}
\end{equation}
is diffeomorphic to a torus $\mathbb T^{r+1}$.

Now, as a modification of the construction of generalized action-angle coordinates in the
non-commutative version of the Arnold--Liouville theorem on symplectic manifolds (see \cite{N}), we get:

\begin{thm}\label{TEOREMA3}
There exists a torodial neighborhood $\mathcal U$ of $M_\mathbf c^0$, $\mathcal U\thickapprox \mathbb T^{r+1}\times D^m$,
endowed with cosymplectic action-angle coordinates
\begin{align*}
&\varphi_1,\dots,\varphi_{r+1},I_1,\dots,I_{r+1},q_1,\dots,q_k,p_1,\dots,p_k \quad (2k=m-r),\\
&I_\mu=I_{\mu}(f_1,\dots,f_r), \quad  q_l=q_l(f_1,\dots,f_m), \quad  p_l=p_l(f_1,\dots,f_m),
\end{align*}
where $\varphi_1,\dots,\varphi_{r+1}$ are angle coordinates of  $\mathbb T^{r+1}$, such that the local expressions for $\omega$ and $\eta$ are of the form
\begin{equation}\label{omegaeta}
\omega=\sum_{\mu=1}^{r+1} d\varphi_\mu\wedge  dI_\mu+\sum_{l=1}^k dq_l\wedge dp_l,
\qquad \eta=dF+\sum_{\mu=1}^{r+1} b_{\mu,r+1}d\varphi_\mu,
\end{equation}
where $F=F(f_1,\dots,f_m)$ and $b_{\mu,r+1}$ are constants.

The Reeb flow \eqref{REEBflow} on $\mathcal U$ is conditionally periodic:
$$
\dot \varphi_\mu=\omega_\mu(I_1,\dots,I_{r+1}), \quad \dot I_\mu=0, \quad \dot q_l=0,\quad  \dot p_l=0,
$$
where the frequencies $\omega_\mu$ are solutions of the linear system
\begin{equation}\label{sistem}
b_{1\mu}\omega_1+\dots+b_{r+1,\mu}\omega_{r+1}=\delta_{r+1,\mu}, \qquad \mu=1,\dots,r+1
\end{equation}
and $b_{\mu\nu}=\dfrac{\partial I_{\mu}}{\partial f_{\nu}}$,  $\mu=1,\dots,r+1$, $\nu=1,\dots,r$.
\end{thm}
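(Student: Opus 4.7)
The plan is to build the chart in two stages. First, I would construct the symplectic action-angle coordinates by an adaptation of the non-commutative action-angle theorem of Nekhoroshev \cite{N}; this gives $(\varphi,I,q,p)$ with $\omega$ in Darboux form. Second, I would verify that in this same chart the closed $1$-form $\eta$ is automatically in the stated normal form, by a short computation that uses the cosymplectic identities $i_Z\omega=0$, $\eta(Z)=1$ and $\eta(X_f)=0$ for Hamiltonian vector fields of integrals.

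The starting data comes from Theorem~\ref{TEOREMA2}: a toroidal neighborhood $\mathcal U\approx\mathbb T^{r+1}\times D^m$ of $M_\mathbf c^0$ on which $\langle Z,X_{f_1},\dots,X_{f_r}\rangle$ acts with the tori as orbits, and $\pi\colon\mathcal U\to D^m$ is a Poisson map onto a base with Poisson bracket of corank $r$. On $D^m$, Weinstein's splitting theorem provides $r$ Casimirs (which can be taken to be $f_1,\dots,f_r$ themselves) together with Darboux coordinates $q_1,\dots,q_k,p_1,\dots,p_k$ on the symplectic leaves, $2k=m-r$. The actions $I_\mu=(2\pi)^{-1}\oint_{\gamma_\mu}\alpha$, $\mu=1,\dots,r+1$, are defined as the period integrals of a local primitive $\alpha$ of $\omega$ over a basis $\gamma_\mu$ of $H_1(\mathbb T^{r+1})$; since they are invariants of flows tangent to the symplectic leaves, they Poisson-commute with $q_l,p_l$, so $I_\mu=I_\mu(f_1,\dots,f_r)$. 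The conjugate angle coordinates $\varphi_\mu$ are then produced, as in \cite{N}, so that $\omega=\sum_\mu d\varphi_\mu\wedge dI_\mu+\sum_l dq_l\wedge dp_l$.

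In this chart, $Z$ is tangent to the fibers (because $Z(f_i)=0$) and hence has the form $Z=\sum_\nu w_\nu(y)\,\partial/\partial\varphi_\nu$, with $y=(I,q,p)$; similarly $X_{f_j}=\sum_\nu c_{j\nu}(y)\,\partial/\partial\varphi_\nu$ for $j=1,\dots,r$, and the $(r+1)\times(r+1)$ matrix with rows $(w_\nu)$ and $(c_{j\nu})_j$ is non-singular since $Z,X_{f_1},\dots,X_{f_r}$ are linearly independent by Theorem~\ref{TEOREMA}. Writing $\eta=\sum_\nu a_\nu\,d\varphi_\nu+\sum_i A_i\,dy_i$, the identities $\eta(Z)=1$ and $\eta(X_{f_j})=0$ form a non-singular linear system for the $a_\nu$ whose coefficients depend only on $y$, so $a_\nu=a_\nu(y)$. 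Closedness $d\eta=0$ then gives $\partial A_i/\partial\varphi_\mu=\partial a_\mu/\partial y_i$, and integrating in $\varphi_\mu$ around a basic cycle of $\mathbb T^{r+1}$ (on which $A_i$ must be single-valued) forces $\partial a_\mu/\partial y_i=0$, so each $a_\mu$ is a constant, denoted $b_{\mu,r+1}$. What remains of $\eta$ is a closed form on the contractible $D^m$, hence exact, equal to $dF$ for some $F=F(y)=F(f_1,\dots,f_m)$, giving the second identity of \eqref{omegaeta}.

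The linearization of the Reeb flow then follows by substitution: with $Z=\sum_\mu\omega_\mu\,\partial/\partial\varphi_\mu$, the equation $i_Z\omega=0$ together with $dI_\mu=\sum_{\nu=1}^r b_{\mu\nu}\,df_\nu$ yields $\sum_\mu b_{\mu\nu}\omega_\mu=0$ for $\nu=1,\dots,r$, while $\eta(Z)=1$ with $Z(F)=0$ gives $\sum_\mu b_{\mu,r+1}\omega_\mu=1$, and together these constitute the system \eqref{sistem}. I expect the main technical point to be the argument that the $d\varphi_\mu$-coefficients of $\eta$ depend only on the base variables; this is the genuinely cosymplectic ingredient, while the subsequent passage to constants via closedness and single-valuedness is essentially a calculation, and the construction of the symplectic part of the chart follows the classical Nekhoroshev argument.
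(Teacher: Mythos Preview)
Your strategy differs from the paper's in a genuine way. The paper normalizes $\omega$ and $\eta$ \emph{simultaneously}: it first builds Liouville angle coordinates $\theta_\mu$ with $\partial/\partial\theta_\mu=\sum_\nu b_{\mu\nu}(f)X_\nu$ (Step~2), then invokes Zung's ``fundamental conservation property'' to show $\omega$ and $\eta$ are invariant under the resulting $\mathbb T^{r+1}$-action (Step~3), and finally runs a cosymplectic Moser trick whose generating vector field is constrained by \emph{both} $i_{Y_t}\omega_t=\tau$ and $\eta(Y_t)=0$, so that the isotopy preserves $\eta$ while bringing $\omega$ to Darboux form (Step~4). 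You instead propose to put $\omega$ into Darboux form first, by a black-box appeal to Nekhoroshev, and only afterwards analyze $\eta$. Your second stage---extracting the normal form of $\eta$ from $\eta(Z)=1$, $\eta(X_{f_j})=0$, closedness, and single-valuedness of the $A_i$ on the torus---is clean and correct, and is a pleasant alternative to carrying $\eta$ through the Moser argument. The linearization of the Reeb flow at the end is also right.

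The real gap is in your first stage. Nekhoroshev's theorem is stated for a \emph{non-degenerate} symplectic form with $r$ \emph{independent} actions, whereas here $\omega$ has a one-dimensional kernel (spanned by $Z$) and you are asking for $r+1$ angles conjugate to $r+1$ \emph{redundant} actions $I_\mu(f_1,\dots,f_r)$. The sentence ``the conjugate angle coordinates $\varphi_\mu$ are then produced, as in \cite{N}'' hides exactly the work the paper does in Steps~2--5: establishing the matrix identity $b_{\mu\nu}=\partial I_\mu/\partial f_\nu$, proving $\mathbb T^{r+1}$-invariance of $\omega$, and performing a Moser deformation that respects the torus fibration (so that the final diffeomorphism has the form $\theta_\mu\mapsto\theta_\mu+D_\mu(f,q,p)$, $\,(q,p)\mapsto(q^1,p^1)(f,q,p)$). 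Without this structure you cannot justify your key assertion that $Z=\sum_\nu w_\nu(y)\,\partial/\partial\varphi_\nu$ and $X_{f_j}=\sum_\nu c_{j\nu}(y)\,\partial/\partial\varphi_\nu$ with \emph{base-only} coefficients---that property is inherited from the Liouville-coordinate relation $\partial/\partial\theta_\mu=\sum_\nu b_{\mu\nu}(f)X_\nu$ and is preserved only because the Moser isotopy is of the special form above. If instead you normalized $\omega$ by an unconstrained presymplectic Moser argument, the resulting diffeomorphism need not preserve this relation, and then your linear system for the $a_\nu$ would have $\varphi$-dependent coefficients and the argument collapses. (Your claim that $I_\mu=I_\mu(f_1,\dots,f_r)$ also wants a sharper justification than ``Poisson-commute with $q_l,p_l$''; the paper gets it from $dI_\mu=\beta_\mu=\sum_{\nu=1}^r b_{\mu\nu}\,df_\nu$.)

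In short: your $\eta$ analysis is a legitimate alternative route once the $\omega$-chart is in hand, but the construction of that chart in the degenerate, redundant-action setting is the heart of the theorem and cannot be outsourced to \cite{N}; you need to supply the cosymplectic adaptation explicitly, as the paper does.
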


Note that the action variables $I_1,\dots,I_{r+1}$ are redundant --- there are $r$ functionally independent functions among them.

\begin{proof}
\emph{Step 1.} Consider a neighborhood $\mathcal U$ of $M_\mathbf c^0$ where
\begin{equation}\label{fibration}
\pi=(f_1,\dots,f_m)\colon \mathcal U \longrightarrow  D^m\subset \R^m(z_1,\dots,z_m)
\end{equation}
is a submersion onto a neighborhood $D^m$ of $\mathbf c=(c_1,\dots,c_m)$, with fibers that are $(r+1)$--dimensional tori.
The functions $f_i$ can be also considered as coordinates of the ball $D^m$.

We look at foliations $\mathcal{P}$ and $\mathcal{Q}$ of $\mathcal U$, defined by the mappings $\pi$ and
\[
\rho=(f_1,\dots,f_r)\colon\mathcal U\longrightarrow D^r\subset \R^r(z_1,\dots,z_r),
\]
respectively.
From the proof of Theorem \ref{TEOREMA},  with $Y_H$ replaced by $Z$, we have that
\[
\langle Z,X_{f_1},\dots,X_{f_r}\rangle\vert_x=T_x\mathcal{P}
\]
and $\mathcal X=\langle Z, X_{f_1},\dots,X_{f_r}\rangle$
is an Abelian Lie algebra of symmetries of the Reeb flow \eqref{REEBflow} that has the leaves of $\mathcal P$ as orbits.

Note that $(T_x\mathcal{P})^\omega=T_x\mathcal{Q}$ and
since $T_x\mathcal{P}\subset T_x\mathcal{Q}$, it follows that $T_x\mathcal{P}$ is isotropic and $T_x\mathcal{Q}$ is coisotropic with respect to $\omega$. Also,
$\omega\vert_\mathcal P=0$.

Indeed, first note that $X_{f_j}(f_i)=\{f_i,f_j\}=0$, for all $i=1,\dots,r$, $j=1,\dots,m$. Also, since $f_1,\dots,f_r$ are integrals of \eqref{REEBflow},
 $Z(f_i)=0$, $i=1,\dots,r$. In other words,
\[
\langle Z,X_{f_1},\dots,X_{f_m}\rangle\vert_x\subset T_x\mathcal{Q}.
\]
In the same way we proved that $Z,X_{f_1},\dots,X_{f_r}$ are independent vector fields, we have that
$Z,X_{f_1},\dots,X_{f_m}$ are independent. Thus, the dimension of
$\langle Z,X_{f_1},\dots,X_{f_m}\rangle\vert_x$ equals $m+1$ and
\[
\langle Z,X_{f_1},\dots,X_{f_m}\rangle\vert_x=T_x\mathcal{Q}.
\]

Further, since $i_Z\omega=0$ and $\{f_i,f_j\}=0$, $i=1,\dots,r$, $j=1,\dots,m$, we have
\begin{align*}
\omega(Z,X_{f_j})=0,\quad \omega(Z,Z)=0, \quad \omega(X_{f_i},X_{f_j})=0, \quad i=1,\dots,r, \quad j=1,\dots,m.
\end{align*}
Thus, $(T_x\mathcal{P})^\omega=T_x\mathcal{Q}$ and $(T_x\mathcal{Q})^\omega=T_x\mathcal{P}$.

\medskip

\emph{Step 2.}
Let $\Sigma$ be an $(n-m)$--dimensional surface which intersects every torus $\pi^{-1}(\mathbf c')$ close to $M_\mathbf c^0=\pi^{-1}(\mathbf c)$ in a unique point
$x(\mathbf c')$.
Every point $y\in \pi^{-1}(\mathbf c')$ can be obtained from the point $x(\mathbf c')$ along the flows of commuting vector fields $X_{f_1},\dots,X_{f_r},Z$ after
times
$t_1,\dots,t_r,t_{r+1}$ and $\pi^{-1}(\mathbf c')\cong \mathbb T^{r+1}=\R^{r+1}/\Gamma_{\mathbf c'}$ ,
where the lattice $\Gamma_{\mathbf c'}$ depends on $\mathbf c'$
\cite{Ar}.

There exist angle coordinates $\theta_1,\dots,\theta_{r+1}$, such that $(\theta_1,\dots,\theta_{r+1},f_1,\dots,f_{m})$ are coordinates on
$\mathcal U$ (referred  as \emph{Liouville coordinates} in \cite{Zung}) and
\begin{equation}\label{VAZNO}
\dfrac{\partial}{\partial\theta_\mu}=\sum\limits_{\nu=1}^{r+1} b_{\mu\nu}X_\nu, \qquad \mu=1,\dots,r+1,
\end{equation}
for some functions $b_{\mu\nu}=b_{\mu\nu}(f_1,\dots,f_m)$.
Here we set $X_{\mu}=X_{f_{\mu}}$, $\mu=1,\dots,r$, $X_{r+1}=Z$.

Recall that $(T_x\mathcal{P})^\omega = T_x\mathcal{Q}$, $T_x\mathcal{P}\subset T_x\mathcal{Q}$ and $\omega\vert_\mathcal P=0$. It follows that
\[
\omega=\sum\limits_{\mu=1}^{r+1}\sum\limits_{\nu=1}^r \widetilde{b}_{\mu\nu}d\theta_\mu\wedge df_\nu+\sum\limits_{1\leqslant i<j\leqslant m} a_{ij}df_i\wedge df_j,
\qquad \eta=\sum\limits_{\mu=1}^{r+1}\widetilde{b}_{\mu,r+1}d\theta_\mu+\sum\limits_{j=1}^m a_jdf_j.
\]

We will prove that $b_{\mu\nu}=\widetilde{b}_{\mu\nu}$, $\mu,\nu=1,\dots,r+1$.

Observe that  from \eqref{definicije} we have relations
 $i_{X_{f_\lambda}}\omega=df_\lambda-Z(f_\lambda)\eta=df_\lambda$ (since $Z(f_\lambda)=0$) and $\eta(X_{f_\lambda})=0$, $\lambda=1,\dots,r$.
  Also, by definition of $Z$,  $i_Z\omega=0$ and $\eta(Z)=1$. Now for every $\lambda=1,\dots,r$ and for every $\xi$ we have
\begin{align*}
df_\lambda(\xi)=i_{X_{f_\lambda}}\omega(\xi)&=\sum\limits_{\mu=1}^{r+1}\sum\limits_{\nu=1}^r \widetilde{b}_{\mu\nu}d\theta_\mu\wedge
df_\nu(X_{f_\lambda},\xi)+\sum\limits_{1\leqslant i<j\leqslant m} a_{ij}df_i\wedge df_j(X_{f_\lambda},\xi)\\
&=\sum\limits_{\mu=1}^{r+1}\sum\limits_{\nu=1}^r \widetilde{b}_{\mu\nu}d\theta_\mu(X_{f_\lambda})df_\nu(\xi).
\end{align*}

Therefore, we conclude that
\begin{equation}\label{B1}
\sum\limits_{\mu=1}^{r+1}\widetilde{b}_{\mu\nu}d\theta_\mu(X_{f_\lambda})=\delta_{\lambda\nu},\qquad  \lambda,\nu=1\dots,r.
\end{equation}

For every $\xi$ we also have
\[
0=i_Z\omega(\xi)=\sum\limits_{\mu=1}^{r+1}\sum\limits_{\nu=1}^r \widetilde{b}_{\mu\nu}d\theta_\mu\wedge df_\nu(Z,\xi)+\sum\limits_{1\leqslant i<j\leqslant m}
a_{ij}df_i\wedge df_j(Z,\xi).
\]

Since $df_i(Z)=Z(f_i)=0$ for all $i=1,\dots,m$, we get $df_i\wedge df_j(Z,\xi)=0$ and
\[
d\theta_\mu\wedge df_\nu(Z,\xi)=\begin{vmatrix}
d\theta_\mu(Z) & df_\nu(Z) \\
d\theta_\mu(\xi) & df_\nu(\xi)
\end{vmatrix}=\begin{vmatrix}
d\theta_\mu(Z) & 0 \\
d\theta_\mu(\xi) & df_\nu(\xi)
\end{vmatrix}=d\theta_\mu(Z)df_\nu(\xi).
\]

Hence,
\begin{align*}
0=\sum\limits_{\mu=1}^{r+1}\sum\limits_{\nu=1}^r \widetilde{b}_{\mu\nu}d\theta_\mu\wedge df_\nu(Z,\xi)+\sum\limits_{1\leqslant i<j\leqslant m} a_{ij}df_i\wedge
df_j(Z,\xi)=\sum\limits_{\mu=1}^{r+1}\sum\limits_{\nu=1}^r \widetilde{b}_{\mu\nu}d\theta_\mu(Z)df_\nu(\xi),
\end{align*}
and we obtain
\begin{equation}\label{B2}
\sum\limits_{\mu=1}^{r+1}\widetilde{b}_{\mu\nu}d\theta_\mu(Z)=0,\qquad  \nu=1,\dots,r.
\end{equation}

Next, from the definition of the Reeb vector field and the Hamiltonian vector fields \eqref{definicije},
and the relations $df_j(Z)=Z(f_j)=0$, $j=1,\dots,m$, we get
\begin{align}
\label{B3} &0=\eta(X_{f_\lambda})=\sum\limits_{\mu=1}^{r+1}\widetilde{b}_{\mu,r+1}d\theta_\mu(X_{f_\lambda})+\sum\limits_{j=1}^m a_j df_j(X_{f_\lambda})=
\sum\limits_{\mu=1}^{r+1}\widetilde{b}_{\mu,r+1}d\theta_\mu(X_{f_\lambda}), \\
\label{B4} &1=\eta(Z)=\sum\limits_{\mu=1}^{r+1}\widetilde{b}_{\mu,r+1}d\theta_\mu(Z)+\sum\limits_{j=1}^m a_j df_j(Z)
=\sum\limits_{\mu=1}^{r+1}\widetilde{b}_{\mu,r+1}d\theta_\mu(Z).
\end{align}

The equations \eqref{B1}, \eqref{B2}, \eqref{B3}, \eqref{B4} in a compact form can be written as
\[
\sum_{\mu=1}^{r+1} X_\lambda(d\theta_\mu) \widetilde b_{\mu\nu}=\delta_{\lambda\nu}, \qquad \lambda,\nu=1,\dots,r+1,
\]
where $X_{\mu}=X_{f_{\mu}}$, $\mu=1,\dots,r$, $X_{r+1}=Z$.

On the other hand, from \eqref{VAZNO} we also have
\[
\delta_{\mu\lambda}=\dfrac{\partial}{\partial\theta_\mu}(d\theta_\lambda)=
\sum\limits_{\nu=1}^{r+1} b_{\mu\nu}X_\nu(d\theta_\lambda) \qquad \mu,\lambda=1,\dots,r+1.
\]

Therefore, as the right inverse and the left inverse of the matrix $\big(X_\lambda(d\theta_\mu)\big)$, the matrices $\big(\widetilde{b}_{\mu\nu}\big)$ and
$\big(b_{\mu\nu}\big)$ are equal.

\medskip

\emph{Step 3.} From the Cartan formula $L_{X_\mu}=i_{X_\mu}\circ d +d\circ i_{X_\mu}$ and the closedness of $\omega$ and $\eta$, it follows that
\[
L_{X_{\mu}}(\omega)=0, \qquad L_{X_{\mu}}(\eta)=0, \qquad \mu=1,\dots,r+1,
\]
i.e., $\omega$ and $\eta$ are invariant with respect to the action of $\mathcal X$.
We have the natural induced action of $\mathbb T^{r+1}$ on $\mathcal U$ by the translations in
angle coordinates (\emph{Liouville torus action} \cite{Zung}).
According to Theorem 2.2 (Fundamental conservation property of Liouville torus actions) in Zung \cite{Zung},
we get that $\omega$ and $\eta$ are invariant with respect
to the torus action as well:
\[
L_{V_{\mu}}(\omega)=0, \qquad L_{V_{\mu}}(\eta)=0, \quad V_\mu=\dfrac{\partial}{\partial\theta_\mu}, \quad \mu=1,\dots,r+1.
\]

We can write $\omega=\alpha+\beta$, $\eta=\widetilde\alpha+\widetilde\beta$, where
\begin{align*}
&\beta=\displaystyle\sum\limits_{\mu=1}^{r+1}\sum\limits_{\nu=1}^r b_{\mu\nu}d\theta_\mu\wedge df_\nu, \quad
\alpha=\displaystyle\sum\limits_{1\leqslant i<j\leqslant m} a_{ij}df_i\wedge df_j, \\
&\widetilde\beta=\sum\limits_{\mu=1}^{r+1}{b}_{\mu,r+1}d\theta_\mu, \qquad\qquad \widetilde\alpha=\sum\limits_{j=1}^m a_j df_j.
\end{align*}

It is obvious that $L_{V_\mu}(\beta)=0$ and $L_{V_\mu}(\widetilde\beta)=0$, $\mu=1,\dots,r+1$. Therefore,
$L_{V_\mu}(\alpha)=0$ and $L_{V_\mu}(\widetilde\alpha)=0$, and the functions $a_{ij}$, $a_j$ depend only on $(f_1,\dots,f_m)$.

Since $a_j$ and $b_{\mu,r+1}$ do not depend on $\theta_1,\dots,\theta_{r+1}$ and $d\eta=0$, we get that
$\widetilde\alpha=\pi^*\widetilde\alpha'$, where $\widetilde\alpha'$ is a closed form on $D^m$ and that $b_{\mu,r+1}$ are constants.
According to the Poincar\'e lemma \cite{LM},
there exists a function  $\widetilde{F}(z_1,\dots,z_m)$ on $D^m$ (if necessary, by taking a smaller ball $D^m$),
such that $\widetilde\alpha'=d\widetilde{F}$. Then $\widetilde\alpha=\pi^*d\widetilde{F}=d\pi^*\widetilde{F}=d\widetilde{F}(f_1,\dots,f_m)$ and
\begin{equation}\label{eta}
\eta=d\widetilde{F}+\sum\limits_{\mu=1}^{r+1} b_{\mu,r+1}d\theta_\mu.
\end{equation}

Next, as $d\omega=d\alpha+d\beta=0$ and $b_{\mu\nu}$ and $a_{ij}$ depend only on $(f_1,\dots,f_m)$,
it follows that
\[
d\alpha=0, \qquad d\beta=0.
\]

Introduce 1--forms $\beta_\mu=\displaystyle\sum\limits_{\nu=1}^r b_{\mu\nu}df_\nu$, $\mu=1,\dots,r+1$. Then
$\beta=\displaystyle\sum\limits_{\mu=1}^{r+1}d\theta_\mu\wedge\beta_\mu$ and
\[
d\beta=-\sum\limits_{\mu=1}^{r+1} d\theta_\mu\wedge d\beta_\mu=0 \quad \Rightarrow \quad d\beta_\mu=0.
\]

Thus, as in the case of $\widetilde\alpha$, eventually by shrinking of $D^m$, we have that $\beta_\mu=\pi^*\beta_\mu'$, where
$\beta_\mu'=dI_\mu(z_1,\dots,z_m)$, and  $\beta_\mu=dI_\mu(f_1,\dots,f_m)$, $\mu=1,\dots,r+1$.
Since $dI_\mu=\beta_\mu$,  the functions $I_\mu$ depend only on $f_1,\dots,f_r$
 and
\[
b_{\mu\nu}=\dfrac{\partial I_{\mu}}{\partial f_{\nu}},  \quad \mu=1,\dots,r+1, \quad \nu=1,\dots,r.
\]

As a result,
\[
\omega=\sum_{\mu=1}^{r+1} d\theta_\mu \wedge dI_\mu+\displaystyle\sum\limits_{1\leqslant i<j\leqslant m} a_{ij}df_i\wedge df_j.
\]
Also, since the $((r+1)\times (r+1))$--matrix $\big(b_{\mu\nu}\big)$ is non-singular,
its submatrix $\big(\partial I_\mu/\partial f_\nu\big)$ has rank $r$ and
\[
\langle dI_1,\dots, dI_{r+1}\rangle=\langle df_1,\dots,df_r\rangle.
\]
We can conclude that every 1--form that can be written in terms of $df_1,\dots,df_r$ can also be written (in a non-unique way) in terms of $dI_1,\dots,dI_{r+1}$.

\begin{rem}
Since $\omega\vert_{\pi^{-1}(\mathbf c')}=0$ and the base $D^m$ of the fibration \eqref{fibration} is diffeomorphic  to a $m$--dimensional ball, the class $[\omega]$
in the second cohomology group $H^2(\mathcal U,\mathbb R)$ is equal to zero.
Therefore, there exist 1--form $\lambda$, such that $\omega=-d\lambda$ on $\mathcal U$. As in the symplectic case,
we can  also define redundant action variables
 by
\[
I_\mu=\frac{1}{2\pi}\int_{\gamma_\mu} \lambda, \qquad \mu=1,\dots,r+1,
\]
where $\gamma_1,\dots,\gamma_{r+1}$ form a basis of 1--cycles on $\pi^{-1}(\mathbf c')$.
The functions $I_\mu$ do not depend on the angle variables $\theta_\nu$. Indeed, by the Stokes theorem
\[
\int_{\gamma_\mu} \lambda-\int_{\gamma'_\mu}\lambda=\int_{\sigma} d\lambda=\int_\sigma\omega=0,
\]
where $\sigma\subset\pi^{-1}(\mathbf c')$, $\partial\sigma=\gamma_\mu-\gamma_\mu'$.  Thus, $I_\mu=I_\mu(f_1,\dots,f_{m})$.
Now, by using the identity
$\lambda=\sum\limits_\mu I_\mu d\theta_\mu+\sum\limits_i c_i df_i$,  one can give a different proof for steps 2 and 3.
\end{rem}

\emph{Step 4.} We need the following statement, usually formulated for symplectic linear spaces \cite{LM}:

\begin{lem}\label{POMOC}
Let $\Omega$ be a 2--form on a vector space $\mathbb V$.  Let $\mathbb Q<\mathbb V$ be a coisotropic subspace with respect
to $\Omega$: $\mathbb P=\mathbb Q^\Omega < \mathbb Q$. Further, let $\mathbb S<\mathbb Q$ be the subspace transverse to $\mathbb P$
($\mathbb Q=\mathbb P\oplus \mathbb S$).
Then the restriction of $\Omega$ to $\mathbb S$ is non-degenerate. In other words, the form $\Omega$ induces a symplectic form on
the quotient space $\mathbb Q/\mathbb P$.
\end{lem}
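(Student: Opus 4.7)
The plan is to unpack the definitions and run the standard linear-algebra argument that a coisotropic subspace modulo its $\Omega$-orthogonal carries an induced non-degenerate form. I will first show that $\Omega|_{\mathbb{S}}$ is non-degenerate; the statement about $\mathbb{Q}/\mathbb{P}$ then follows automatically since the natural projection $\mathbb{Q} \to \mathbb{Q}/\mathbb{P}$ restricts to an isomorphism $\mathbb{S} \xrightarrow{\sim} \mathbb{Q}/\mathbb{P}$ because $\mathbb{Q} = \mathbb{P}\oplus \mathbb{S}$.

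To show non-degeneracy on $\mathbb{S}$, I would suppose $s \in \mathbb{S}$ is in the kernel, i.e.\ $\Omega(s, s') = 0$ for every $s' \in \mathbb{S}$, and aim to prove $s = 0$. The crucial observation is that such an $s$ is actually $\Omega$-orthogonal to the whole of $\mathbb{Q}$: indeed, for any $p \in \mathbb{P} = \mathbb{Q}^{\Omega}$ we have $\Omega(p, s) = 0$ by definition of $\mathbb{Q}^{\Omega}$ (since $s \in \mathbb{S} \subset \mathbb{Q}$), so by skew-symmetry $\Omega(s, p) = 0$. Combining with $\Omega(s, s') = 0$ for $s'\in \mathbb{S}$ and decomposing an arbitrary $q \in \mathbb{Q}$ as $q = p + s'$ with $p \in \mathbb{P}$ and $s' \in \mathbb{S}$, we get $\Omega(s, q) = 0$ for every $q \in \mathbb{Q}$. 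Hence $s \in \mathbb{Q}^{\Omega} = \mathbb{P}$, and since $s \in \mathbb{S}$ and $\mathbb{P}\cap \mathbb{S} = 0$, we conclude $s = 0$.

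For the symplectic form on $\mathbb{Q}/\mathbb{P}$, I would define $\bar\Omega([q_1], [q_2]) = \Omega(q_1, q_2)$ and note this is well-defined precisely because $\mathbb{P} = \mathbb{Q}^{\Omega}$ (shifting a representative by an element of $\mathbb{P}$ does not change the value), it is skew, and its non-degeneracy is equivalent to the non-degeneracy of $\Omega|_{\mathbb{S}}$ just established. There is no serious obstacle here; the whole statement is a two-line consequence of the identification $\mathbb{P} = \mathbb{Q}^{\Omega}$ and the direct-sum decomposition $\mathbb{Q} = \mathbb{P} \oplus \mathbb{S}$, and the only thing to be mildly careful about is respecting the sign convention (using antisymmetry to pass between $\Omega(p,s)$ and $\Omega(s,p)$), which is routine.
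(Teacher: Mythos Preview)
Your argument is correct and is the standard linear-algebra proof of this fact. Note that the paper does not actually give its own proof of this lemma: it merely states the result and cites Libermann--Marle \cite{LM}, so there is no authorial proof to compare against; your write-up supplies exactly the routine argument the reference would contain.
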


Let $x_0\in M_\mathbf c^0$.
We apply Lemma \ref{POMOC} for $\mathbb V=T_{x_0}\mathcal{U}$, the form $\Omega=\omega\vert_{x_0}$,
and the spaces
\begin{align*}
&\mathbb Q=T_{x_0} \mathcal Q=\langle Z,X_{f_1},\dots,X_{f_m}\rangle\vert_{x_0}=
\left\langle\frac{\partial}{\partial\theta_1},\dots,\frac{\partial}{\partial\theta_{r+1}},\frac{\partial}{\partial f_{r+1}},\dots,\frac{\partial}{\partial
f_{m}}\right\rangle\Big\vert_{x_0}, \\
&\mathbb P=T_{x_0}\mathcal P=\langle Z,X_{f_1},\dots,X_{f_r}\rangle\vert_{x_0}=
\left\langle \frac{\partial}{\partial\theta_1},\dots,\frac{\partial}{\partial\theta_{r+1}}\right\rangle\Big\vert_{x_0},\\
&\mathbb S=\left\langle\frac{\partial}{\partial f_{r+1}},\dots,\frac{\partial}{\partial f_{m}}\right\rangle\Big\vert_{x_0}.
\end{align*}

Since $\omega\vert_\mathbb S=\Omega\vert_\mathbb S$ is non-degenerate, there exists a linear change of variables on $\mathcal U$
\[
f_{i}=f_i(Q_1,\dots,Q_k,P_1,\dots,P_k), \quad i=r+1,\dots,m,
\]
such that $\omega$ at $x_0$ gets the form
\begin{align*}
\omega\vert_{x_0}=&\sum_{\mu=1}^{r+1}d\theta_\mu\wedge dI_\mu+\sum_{i,j=1}^r B_{ij}df_i\wedge df_j\\
&+\sum_{i=1}^r\sum_{l=1}^k(E_{il} df_i\wedge dQ_l+F_{il}df_i\wedge dP_l)+\sum_{l=1}^k dQ_l\wedge dP_l,
\end{align*}
where $B_{ij}=-B_{ji}$, $i,j=1,\dots,m$, and $E_{il}$, $F_{il}$, $i=1,\dots,r$, $l=1,\dots,k$, are some constants.
Next, we can perform an additional linear change of variables
\[
Q_{l}=q_l-\sum_{i=1}^r F_{il} f_i, \qquad P_{l}=p_l+\sum_{i=1}^r E_{il} f_i, \quad l=1,\dots,k,
\]
implying that the original 2--form gets the form
\[
\omega=\sum_{\mu=1}^{r+1}d\theta_\mu\wedge dI_\mu+(\text{2--form $\alpha$ written in terms of $f_1,\dots,f_r,q_1,\dots,q_k,p_1,\dots,p_k$}),
\]
such that
\[
\omega\vert_{x_0}=\sum_{\mu=1}^{r+1}d\theta_\mu\wedge dI_\mu+\Delta
+\sum_{l=1}^k dq_l\wedge dp_l
\]
where
\begin{align*}
\Delta=      \sum\limits_{i,j=1}^r B_{ij}df_i\wedge df_j+\sum_{1\le i<j\le r}\sum_{l=1}^k( F_{il}E_{jl}-E_{il}F_{jl})df_i\wedge df_j.
\end{align*}

Now we can apply a cosymplectic variant of Moser's trick  for $(\mathcal U,\omega,\eta)$ and $(\mathcal U,\omega_0,\eta)$, where
\[
\omega_0=\sum_{\mu=1}^{r+1}d\theta_\mu\wedge dI_\mu+\Delta
+\sum_{l=1}^k dq_l\wedge dp_l
\]
is a closed 2--form on $\mathcal U$ that coincides with $\omega$ at the torus $M^0_\mathbf c$.

From now on, for simplicity, we use the identification $\mathcal U\thickapprox \mathbb T^{r+1}\times D^m$
and consider $(\theta_1,\dots,\theta_{r+1})$ and $(f_1,\dots,f_r,q_1,\dots,q_k,p_1,\dots,p_k)$ as coordinates on $\mathbb T^{r+1}$ and $D^m$, respectively.
Also, without loss of generality, we assume that the torus $M_\mathbf c^0$ is given as the zero level-set
\[
M_\mathbf c^0=\{(\theta_1,\dots,\theta_{r+1},0,\dots,0)\,\vert\, \theta\in [0,2\pi]\}.
\]

Recall that an isotopy on a differential manifold $M$ is a smooth map $\phi:M\times I\longrightarrow M$ such that the family of maps
$\phi_t=\phi(\cdot,t):M\longrightarrow M$ is a family of diffeomorphisms with $\phi_0=\Id$. For each isotopy $\phi_t$ one can construct a time-dependent vector field
$Y_t$ such that
\begin{equation}\label{izotopija}
\dfrac{d\phi_t}{dt}(p)=Y_t(\phi_t(p)).
\end{equation}
Then it is well known that
\begin{equation}\label{LijevIzvod2}
\dfrac{d}{dt}\phi_t^*\alpha_t=\phi_t^*\big(L_{Y_t}\alpha_t+\dfrac{d\alpha_t}{dt}\big),
\end{equation}
where $\alpha_t$ a smooth family of $k$--forms on $M$.

Let $I$ be an open interval in $\R$ such that $[0,1]\subset I$. Define a family of closed 2--forms
\begin{equation}\label{omega_t}
\omega_t=\omega_0+
t(\omega-\omega_0)=\omega_0+t(\alpha-\Delta-\sum_{l=1}^k dq_l\wedge dp_l), \qquad t\in I.
\end{equation}

From the Poincar\'e lemma, there exists a 1--form $\tau$ on $D^m$ (if necessary, by shrinking of $D^m$), such that
$\omega_0-\omega=d\tau$, $\tau\vert_{M^0_\mathbf c}=0$.

The idea of Moser's trick is to find an isotopy $\phi_t$ such that
\begin{align}
&\phi_t^*\omega_t=\omega_0,\label{Mozer1}\\
&\phi_t^*\eta=\eta,\label{Mozer2}
\end{align}
for all $t\in[0,1]$. By \eqref{LijevIzvod2} and the Cartan formula applied on \eqref{Mozer1}, \eqref{Mozer2}, we get
\begin{align*}
&0=\dfrac{d}{dt}\phi_t^*\omega_t=\phi_t^*\big(L_{Y_t}\omega_t+\dfrac{d\omega_t}{dt}\big)=
\phi_t^*(L_{Y_t}\omega_t+\omega-\omega_0)=\phi_t^*(d(i_{Y_t}\omega_t)+\omega-\omega_0),\\
&0=\dfrac{d}{dt}\phi_t^*\eta=\phi_t^* L_{Y_t}\eta=\phi_t^*(d(\eta(Y_t))).
\end{align*}

Vice versa, if $Y_t$ is a vector field that satisfies equations
\begin{align*}
d(i_{Y_t}\omega_t)=\omega_0-\omega=d\tau, \qquad \eta(Y_t)=\mathrm{const},
\end{align*}
then the associated isotopy $\phi_t$ leads to \eqref{Mozer1}, \eqref{Mozer2}.

We have a family of cosymplectic structures $(\omega_t,\eta)$ on $\mathcal U$, at least by contraction of $D^m$.
Thus, the equations
\begin{align}\label{Yt}
i_{Y_t}\omega_t=\tau, \qquad \eta(Y_t)=0,
\end{align}
defines the family of vector fields $Y_t$,
$Y_t\vert_{M_{\mathbf c}^0}=0$.

From \eqref{omega_t} and the fact that $\tau$ is a 1--form on $D^m$, we get that
the vector fields $Y_t$, $t\in I$ do not have terms with $\dfrac{\partial}{\partial f_\nu}$,  $\nu=1,\dots,r$.
Next, since $\omega_t$, $\tau$, and $\eta$ are invariant with respect to the $\mathbb T^{r+1}$--action,
the coefficients of the vector field $Y_t$ do not depend on $\theta_1,\dots,\theta_{r+1}$ as well.
Since the torus $M_{\mathbf c}^0$ is compact, eventually by taking a smaller neighborhood $\mathcal U$, we get the isotopy $\phi_t\colon\mathcal
U\to\mathcal U$,
\begin{align*}
&\theta_\mu^t=\theta_\mu^t(\theta_1,\dots,\theta_{r+1},f_1,\dots,f_r,q_1,\dots,q_k,p_1,\dots,p_k),&& \mu=1,\dots,r+1,\\
&f_\nu^t=f_\nu^t(\theta_1,\dots,\theta_{r+1},f_1,\dots,f_r,q_1,\dots,q_k,p_1,\dots,p_k),&& \nu=1,\dots,r,\\
&q_l^t=q_l^t(\theta_1,\dots,\theta_{r+1},f_1,\dots,f_r,q_1,\dots,q_k,p_1,\dots,p_k),&& l=1,\dots,k,\\
&p_l^t=p_l^t(\theta_1,\dots,\theta_{r+1},f_1,\dots,f_r,q_1,\dots,q_k,p_1,\dots,p_k),&& l=1,\dots,k,
\end{align*}
which satisfies \eqref{Mozer1}, \eqref{Mozer2}  and $\phi_t\vert_{M^0_\mathbf c}=\Id$.
We have (see \eqref{izotopija}):
\[
Y_t\vert_{(\theta_1^t,\dots,\theta_{r+1}^t,f_1^t,\dots,f_r^t,q_1^t,\dots,q_k^t,p_1^t,\dots,p_k^t)}=\sum\limits_{\mu=1}^{r+1}
\dfrac{d\theta_\mu^t}{dt}\dfrac{\partial}{\partial\theta_\mu}+\sum\limits_{\nu=1}^r \dfrac{df_\nu^t}{dt}\dfrac{\partial}{\partial f_\nu}+\sum\limits_{l=1}^k
\dfrac{dq_l^t}{dt}\dfrac{\partial}{\partial q_l}+\sum\limits_{l=1}^k \dfrac{dp_l^t}{dt}\dfrac{\partial}{\partial p_l}.
\]
It follows that $\dfrac{df_\nu^t}{dt}=0$ and $\dfrac{d\theta_\mu^t}{dt}$, $\dfrac{dq_l^t}{dt}$, $\dfrac{dp_l^t}{dt}$ do not depend on $\theta_1,\dots,\theta_{r+1}$.

As a result,
we obtain a diffeomorphism $\Phi=\phi_1\colon \mathcal U\to \mathcal U$ of the form
\begin{align*}
&\theta_\mu^1=\theta_\mu+D_\mu(f_1,\dots,f_r,q_1,\dots,q_k,p_1,\dots,p_k),\,\quad \mu=1,\dots,r+1,\\
&q_l^1=q_l^1(f_1,\dots,f_r,q_1,\dots,q_k,p_1,\dots,p_k), \qquad\qquad  l=1,\dots,k,\\
&p_l^1=p_l^1(f_1,\dots,f_r,q_1,\dots,q_k,p_1,\dots,p_k), \qquad\qquad  l=1,\dots,k,\\
& D_\mu(0,\dots,0)=0, \qquad q_l^1(0,\dots,0)=0, \qquad p_l^1(0,\dots,0)=0,
\end{align*}
such that
$\Phi^*\omega=\omega_0$, $\Phi^*\eta=\eta$.

\medskip

\emph{Step 5.}
Since every 1--form that can be expressed in terms of $df_1,\dots,df_r$ can also be expressed in terms of $dI_1,\dots,dI_{r+1}$ and
\begin{align*}
\Delta=d\big(\sum\limits_{j=1}^r \big(\sum\limits_{i=1}^r B_{ij}f_i\big)df_j+\sum_{l=1}^k (F_{l1}f_1+\dots+F_{lr}f_r)(E_{l1}df_1+\dots+E_{lr}df_r)\big),
\end{align*}
we can present (non-uniquely) $\Delta$ in the form
\[
\Delta=d\big(\sum\limits_{\mu=1}^{r+1} G_\mu(f_1,\dots,f_r)dI_\mu\big)=\sum_{\mu=1}^{r+1} dG_\mu \wedge dI_\mu,
\]
 for some functions $G_\mu$, $\mu=1,\dots,r+1$.

 Finally, after the translation of angle variables
\begin{equation}\label{translacija}
\theta_\mu=\varphi_\mu-G_\mu,
\end{equation}
we obtain the required 2--form:
\begin{align*}
\omega_0=\sum_{\mu=1}^{r+1}d\theta_\mu\wedge dI_\mu+\Delta+\sum_{l=1}^k dq_l\wedge dp_l=\sum_{\mu=1}^{r+1}d\varphi_\mu\wedge dI_\mu+\sum_{l=1}^k dq_l\wedge dp_l.
\end{align*}

It is obvious that the 1--form \eqref{eta}, after the above translations in angle variables  takes the form
\[
\eta=dF+\sum\limits_{\mu=1}^{r+1} b_{\mu,r+1}d\varphi_\mu,
\]
for some function $F=F(f_1,\dots,f_m)$.

It remains to observe that the frequencies $\omega_1,\dots,\omega_{r+1}$ of the Reeb flow are the components of the vector filed $X_{r+1}=Z$
with respect to the basis $V_\mu=\dfrac{\partial}{\partial \varphi_\mu}=\dfrac{\partial}{\partial \theta_\mu}$, $\mu=1,\dots,r+1$.
Since the vector fields  $X_1,\dots,X_{r+1}$ and $V_1,\dots, V_{r+1}$ are related by \eqref{VAZNO}, we obtain that the frequencies $\omega_\nu$
are solutions of the linear system \eqref{sistem}.
\end{proof}

\subsection{Integrability of Hamiltonian vector fields}\label{sec3.2}

For  $m=r=n$ (commutative case), there exists a torodial neighborhood $\mathcal U$ of $M_\mathbf c^0$,
$\mathcal U\thickapprox\mathbb T^{n+1}\times D^n$,
endowed with angle coordinates
$\varphi_1,\dots,\varphi_{n+1}$ of the torus $\mathbb T^{n+1}$ and redundant action coordinates $I_1,\dots,I_{n+1}$ of the ball $D^n$, such that the local
expressions for $\omega$ and $\eta$ are of the form
\begin{equation}\label{oe}
\omega=\sum_{i=1}^{n+1} d\varphi_i\wedge  dI_i, \qquad \eta=dF+\sum\limits_{i=1}^n b_{i,n+1}d\varphi_i,
\end{equation}
where action variables $I_i$ and $F$ depend only on values of the integrals $f_1,\dots,f_n$ and $b_{i,n+1}$ are constants.
\footnote{One can compare \eqref{oe} and the universal cosymplectic structure given in \cite{LT}.}

The Reeb flow on $\mathcal U$ is conditionally periodic:
\[
\dot \varphi_i=\omega_i(I_1,\dots,I_{n+1}), \qquad \dot I_i=0, \qquad i=1,\dots,n+1,
\]
where the frequencies $\omega_i$ are solutions of the linear system
\begin{equation}\label{bZ}
b_{1i}\,\omega_1+\dots+b_{n+1,i}\,\omega_{n+1}=\delta_{n+1,i}, \qquad i=1,\dots,n+1,
\end{equation}
and $b_{ij}=\dfrac{\partial I_i}{\partial f_j}$,  $i=1,\dots,n+1$, $j=1,\dots,n$.
The flows $X_{f_1},\dots,X_{f_n}$ are simultaneously solvable by quadratures and quasi-periodic.
The Hamiltonian flows of  $f_1,\dots,f_n$ are of the form
$$
X_{f_k}=\Omega^k_1\frac{\partial}{\partial\varphi_1}+\dots+\Omega^k_{n+1}\frac{\partial}{\partial\varphi_{n+1}},
$$
with frequencies $\Omega^k_i$ determined by the conditions
$$
b_{1j}\,\Omega^k_1+\dots+b_{n+1,j}\,\Omega^k_{n+1}=\delta_{kj}, \qquad j=1,\dots,n+1.
$$

Note that the Hamiltonian flows $X_{f_k}$, in general, are everywhere dense over
$(n+1)$--dimensional tori $\pi^{-1}(\mathbf c')$, and, therefore are not completely integrable in the sense of the usual complete integrability of Hamiltonian flows
on Poisson manifolds.

On the other hand, if there exists a globally defined Casimir function $f_0$ of the Poisson structure, then the Hamiltonian flows $X_{f_k}$
are completely integrable by means of integrals $f_0,f_1,\dots,f_n$. A generic motion is quasi-periodic over $n$ dimensional Lagrangian tori and the construction of
the corresponding action-angle coordinates can be found, e.g., in \cite{LMV}.

\subsection*{Acknowledgments}
B.J. was supported by the Project  7744592 MEGIC ”Integrability
and Extremal Problems in Mechanics, Geometry and Combinatorics” of the Science Fund of Serbia. K.L. was partially funded by the Ministry of Education, Science
and Technological Developments of the Republic of Serbia: grant number 451-03-68/2022-14/200104 with Faculty of Mathematics.


\begin{thebibliography}{84}


\bibitem{Al} {C. Albert} \emph{Le  théorème de réduction de Marsden-Weinstein en  géométrie cosymplectique et de contact},
J. Geom. Phys. {\bf 6} (1989) 628--649.

\bibitem{Ar}
V.\,I. Arnol'd, \emph{Mathematical methods of classical
mechanics}, Springer-Verlag, 1978.

\bibitem{BM} A. Banyaga, P. Molino,  \emph{G\' eom\' etrie des
formes de contact compl\'etement int\'egrables de type torique},
S\'eminare Gaston Darboux, Montpellier (1991-92), 1-25, (in French) English translation:
\emph{Complete Integrability in Contact Geometry}, Appendix B in \emph{A Brief Introduction to Symplectic and Contact Manifolds} by
A. Banyaga  and D. F. Houenou, pp. 107--158,
Nankai Tracts in Mathematics \textbf{15}, World Scientific, 2016.

\bibitem{BG}
G. Bazzoni, O.  Goertsches, \emph{K-Cosymplectic manifolds}, Annals of Global Analysis and Geometry {\bf 47} (2015) no. 3,
239--270,   arXiv:1404.6111.


\bibitem{BJ} A. V. Bolsinov, B. Jovanovi\' c,
\emph{Non-commutative integrability, moment map and geodesic
flows}. Annals of Global Analysis and Geometry {\bf 23} (2003) no. 4,
305--322, arXiv:math-ph/0109031.


\bibitem{BMT} A.\,V. Borisov, I.\,S. Mamaev, A.\,V. Tsiganov,
\emph{Non-holonomic dynamics and Poisson geometry},
Russ. Math. Surv. \textbf{69} (2014), 481--538.


\bibitem{CLL} F. Cantrjin, M de Leon, E. A. Lacomba, \emph{Gradient vector fields on consymplectic manifolds}, J. Phys. A: Math. Gen.
{\bf 25} (1992) 175--188.


\bibitem{CFG} J. F. Carinena, F. Falceto,  J. Grabowski,
\emph{Solvability of a Lie algebra of vector fields implies their integrability by quadratures},
J. Phys. A: Math. Theor. {\bf 49} (2016) 425202,  	arXiv:1606.02472 [math-ph].


\bibitem{EG} O. Esen, P. Guha, \emph{On time-dependent Hamiltonian realizations of planar and non-planar systems}, J. Geom.
Phys., {\bf 127} (2018) 32--45,  	arXiv:1705.06169 [math.DG].

\bibitem{FS}
F. Fasso, N. Sansonetto, \emph{Integrable almost-symplectic Hamiltonian systems}, J. Math. Phys. {\textbf 48}
(2007), no. 9, 092902, 13 pp.

\bibitem{FJ} Y. N. Fedorov, B. Jovanovic, \emph{Quasi-Chaplygin Systems and Nonholonimic Rigid Body Dynamics},
Letters in Mathematical Physics {\bf 76} (2006) 215--230, arXiv:math-ph/0510088

\bibitem{Na} L.\,C.  García--Naranjo,
\emph{Hamiltonisation, measure preservation and first integrals of the multi-dimensional rubber Routh sphere},
Theoretical and Applied Mechanics, \textbf{46} (2019), 65--88,    	arXiv:1901.11092 [nlin.SI].



\bibitem{GMS}{G. Giachetta, L. Mangiarotti, G. A. Sardanashvily},
\emph{Action-angle coordinates for time-dependent completely integrable Hamiltonian systems},  J. Phys. A: Math. Gen. {\bf 35} (2002) L439---L445,
arXiv:math/0204151.

\bibitem{Jo} B. Jovanovi\' c, \emph{Noncommutative integrability and action angle variables in contact
geometry}, Journal of Symplectic Geometry,  \textbf{10} (2012),
535--562, arXiv:1103.3611.

\bibitem{Jo1} B. Jovanovi\' c, \emph{Noether symmetries and integrability in time-dependent mechanics}, Theoretical and Applied Mechanics,  \textbf{43} (2016)
    255-–273, arXiv:1608.07788v1 [math-ph].

\bibitem{Jo2} B. Jovanovi\' c,
\emph{Note on a ball rolling over a sphere: integrable Chaplygin system with an invariant measure without Chaplygin Hamiltonization},
Theoretical and Applied Mechanics, \textbf{46} (2019), 97--108.


\bibitem{JJ1} B. Jovanovi\'c, V. Jovanovi\'c, \emph{Contact flows and integrable systems}, J. Geom. Phys. {\bf 87} (2015) 217--232, arXiv:1212.2918.


\bibitem{KT} B. Khesin, S. Tabachnikov, \emph{Contact complete integrability}, Regular and Chaotic Dynamics, \textbf{15} (2010) 504--520, arXiv:0910.0375.

\bibitem{Koz}
V. V. Kozlov, \emph{The Euler-Jacobi-Lie Integrability Theorem}, Regular and Chaotic Dynamics, {\bf 18} (2013) 329--343.

\bibitem{KM} A. Kiesenhofer, E. Miranda,
\emph{Non-commutative integrable systems on b-symplectic manifolds},
Regular and Chaotic Dynamics, \textbf{21} (2016), no. 6, 643--659.


\bibitem{LMV}
C. Laurent-Gengoux, E. Miranda, P. Vanhaecke, \emph{Action-angle coordinates for integrable
systems on Poisson manifolds}, Int. Math. Res. Not.,  Volume 2011, Issue 8,
1839--1869,
arXiv: arxiv.0805.1679 [math.SG].

\bibitem{LM} P. Libermann, C. Marle, \emph{Symplectic Geometry and Analytical Mechanics}, Riedel, Dordrecht, 1987.


\bibitem{LMP} I. Lacirasella, J. C. Marrero, E.  Padrón, \emph{Reduction of symplectic principal $\R$--bundles},
J. Phys. A: Math. Theor. {\bf 45} (2012) 325202 (29 pp), arXiv:1201.4690.


\bibitem{LT}
M. de  León, G. M. Tuynman, \emph{A universal model for cosymplectic manifolds}, J. Geom. Phys. \textbf{20} (1996) 77--86.


\bibitem{LS} M. de  León, C.  Sardón, \emph{Cosymplectic and contact structures for time dependent
and dissipative Hamiltonian systems},  J. Phys. A: Math. Theor. {\bf 50} (2017) 255205,


\bibitem{MF}
A.\,S. Mishchenko, A.\,T. Fomenko, \emph{Generalized Liouville
method of integration of Hamiltonian systems}. Funkts. Anal.
Prilozh. {\bf 12}, No.2, 46-56  (1978)  Funct. Anal. Appl. {\bf
12}, 113--121  (1978)

\bibitem{N}
N.\, N. Nekhoroshev, \emph{Action-angle variables and their
generalization}, {Trans. Mosc. Math. Soc.} {\bf 26},  180--198
(1972).


\bibitem{GS} {G. A. Sardanashvily}, \emph{Hamiltonian time-dependent mechanics}, J. Math. Phys.
{\bf 39} (1998) 2714--2729.

\bibitem{Sh} M. Shafiee, \emph{On the relation between cosymplectic and symplectic structures},
Journal of Geometry and Physics1, \textbf{78} (2022) 104538, 13 pp.


\bibitem{Zung} N. T. Zung, \emph{A Conceptual Approach to the Problem of
Action-Angle Variables}, Arch. Ration. Mech. Anal. {\bf 229} (2018) 789-–833,  	arXiv:1706.08859 [math.DS].

\end{thebibliography}
\end{document}